\newcommand{\is}{\int_{S^n}}
\newcommand{\be}{\begin{equation}}
\newcommand{\ee}{\end{equation}}
\newcommand{\la}{\langle}
\newcommand{\ra}{\rangle}
\newcommand{\tl}{\tilde}
\newtheorem{theorem}{Theorem}[section]
\theoremstyle{definition}
\theoremstyle{remark}
\newtheorem{remark}[theorem]{Remark}
\numberwithin{equation}{section}
\begin{document}
\setlength{\baselineskip}{1.2\baselineskip}

\title[gradient estimates of mean curvature flow]
{gradient estimates and lower bound for the blow-up time of star-shaped mean curvature flow}

\author{Ling Xiao}

\begin{abstract}

In this paper we consider a star-shaped hypersurface flow by mean curvature. Without any assumption on the convexity,
we give a new proof of gradient estimate for a short time. As an application, we also give a lower bound for the blowing up time.

\end{abstract}

\maketitle
\section{introduction}
\label{sec0}
\setcounter{equation}{0}

Let $F_0:M_n\rightarrow\mathbb{R}^{n+1}$ be a smooth immersion of an n-dimensional hypersurface in Euclidean space.
$M_0=F_0(M_n)$ is a star-shaped hypersurface and $F: M_n\times [0, T)\rightarrow\mathbb{R}^{n+1}$ satisfying
\be\label{eq1.1}
\begin{aligned}
\frac{\partial F}{\partial t}(p\,,t)&=-H(p\,,t)\nu(p\,,t)\\
F(\cdot\,, 0)&=M_0.\\
\end{aligned}
\ee
Here $\nu$ is the outer normal vector, $H$ is the mean curvature of $M_t.$

For closed hypersurfaces, it is well known that the solution of \eqref{eq1.1} exists on a finite time interval
$[0, T_c)$, $0<T_c<\infty,$ and the curvature of the hypersurfaces becomes unbounded as $t\rightarrow T_c.$
The study of a detailed description of the singular behavior as $t\rightarrow T_c$ has drawn lots of attentions
for the past decades (see \cite{And 1},  \cite{Ger}, \cite{HK}, \cite{Hui1}, \cite{Hui2}, and \cite{Hui4}). We can see that, the growth rate of the second fundamental form $A$
as $t$ approaches $T_c$ plays an important role in determining the shape of singularities (see \cite{CM1}, \cite{Eck},
\cite{Hui3}, and \cite{HS}).

Inspired by \cite{Tak}, we give a new proof for the short time gradient estimate for $\langle X, \nu\rangle^{-1}$ of $M_t,$
where $M_t$ is a family of smooth closed n-dimensional hypersurfaces immersed in $\mathbb R^{n+1}$ evolving by mean curvature, and $M_0$ is star-shaped.
As an application, we give a lower bound on the blow-up time
under the assumption that $H$ remains bounded through the flow. Our main theories are stated as following:

\begin{theorem}
\label{th1}
Let $M_t,$ $t\in[0, T_c)$ be a solution of equation \eqref{eq1.1}. Then for every $0<T_0<T_c,$
there exists $0<T\leq T_0,$ such that when $t\in[0, T]$ we have
\[\langle X, \nu\rangle^{-1}\leq 2c_3f_0,\]
here $T=\min\{\frac{1}{Gf_0^4}, T_0\},$ $f_0=\max_{M_0}\langle X, \nu\rangle^{-1},$ $G$ depends on $M_0,$
$T_0,$ and $c_3$ is a constant depending on $M_0.$
\end{theorem}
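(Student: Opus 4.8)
The plan is to follow the support function $S:=\langle X,\nu\rangle$ and its reciprocal $\phi:=S^{-1}$ along the flow \eqref{eq1.1}, and to reduce the estimate to a scalar differential inequality for $\max_{M_t}\phi$ treated by the parabolic maximum principle. First I would record the standard evolution equations coming from $\partial_t F=-H\nu$: namely $\partial_t\nu=\nabla H$ and $\partial_t g_{ij}=-2Hh_{ij}$; then, using $\nabla_i X=e_i$, the Weingarten relation $\nabla_i\nu=h_i^{\ k}e_k$, the identity $\nabla_i\langle X,e_k\rangle=g_{ik}-h_{ik}S$, and the Codazzi equation $\nabla^ih_{ij}=\nabla_jH$, a direct computation gives
\[
\Big(\frac{\partial}{\partial t}-\Delta\Big)S=|A|^2S-2H,
\]
where $\Delta$ is the Laplace--Beltrami operator of $M_t$ (a quick check on a shrinking round sphere confirms the signs).

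Since $M_0$ is compact and star-shaped, $S(\cdot,0)>0$; let $[0,T')\subseteq[0,T_c)$ be the maximal interval on which $S>0$ everywhere on $M_t$, so that on $[0,T')$ the function $\phi=S^{-1}$ is smooth and positive. Differentiating the above identity yields
\[
\Big(\frac{\partial}{\partial t}-\Delta\Big)\phi=-|A|^2\phi+2H\phi^2-\frac{2}{\phi}\,|\nabla\phi|^2 .
\]
The zeroth-order term $2H\phi^2$ has no definite sign, because no convexity or curvature bound is available, and absorbing it is the heart of the matter. The mechanism is the purely algebraic inequality $|A|^2\geq\tfrac1nH^2$: completing the square,
\[
-|A|^2\phi+2H\phi^2\;\le\;-\tfrac1n\phi\,(H-n\phi)^2+n\phi^3\;\le\;n\phi^3 ,
\]
so that $\big(\partial_t-\Delta\big)\phi\le n\phi^3-\tfrac2\phi|\nabla\phi|^2$.

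Next I would apply the maximum principle. With $\varphi(t):=\max_{M_t}\phi(\cdot,t)$, Hamilton's trick gives that $\varphi$ is locally Lipschitz and $\varphi'(t)\le n\varphi(t)^3$ a.e., $\varphi(0)=f_0$ (the gradient term is harmless since it vanishes at a spatial maximum). Comparison with the ODE $y'=ny^3$, $y(0)=f_0$, gives $\varphi(t)\le(f_0^{-2}-2nt)^{-1/2}$, hence $\varphi(t)\le 2f_0$ as long as $t\le\tfrac{3}{8n}f_0^{-2}$. Since $f_0=\max_{M_0}\langle X,\nu\rangle^{-1}$ is bounded below in terms of the geometry of $M_0$ (as $\langle X,\nu\rangle\le|X|$), one may pick $G=G(M_0,T_0)$ large enough that $T:=\min\{(Gf_0^4)^{-1},T_0\}$ satisfies $T\le\tfrac{3}{8n}f_0^{-2}$, giving $\langle X,\nu\rangle^{-1}\le 2f_0$ on $[0,T]$. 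To put the bound in the stated form $2c_3f_0$ with $c_3=c_3(M_0)$ and, crucially, to verify that $T<T'$ (so that $\phi$ is indeed smooth on $[0,T]$ and the argument closes), I would run this in parallel with comparison against the shrinking inscribed and circumscribed spheres of $M_0$: on $[0,T_0]$, with $T_0<T_c$, these sandwich $M_t$ and provide explicit two-sided bounds on $|X|$ along the flow, which keep $S$ away from $0$ on $[0,T]$ and fix the geometric constants.

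I expect the treatment of the indefinite-sign term $2H\phi^2$ to be the only real difficulty; the evolution computations and the maximum-principle step are routine once the geometry is handled carefully. Finally, it is worth noting that the short-time nature of the conclusion is intrinsic to this method: the comparison ODE $y'=ny^3$ blows up in finite time, so the argument cannot by itself be pushed up to $T_c$ — which is exactly why Theorem~\ref{th1} is phrased on $[0,T]$ with $T$ depending on $f_0$.
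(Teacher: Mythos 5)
Your proposal is correct, but it takes a genuinely different route from the paper's. The paper (following Takasao) runs an integral argument: it weights $f=\langle X,\nu\rangle^{-1}$ by the backward heat kernel $\rho_{(Y,s)}$, uses Huisken's monotonicity identity to obtain $\frac{d}{dt}\int_{M_t}f\tilde\rho\,d\mu_t\le 2\int_{M_t}f^2H\tilde\rho\,d\mu_t$, bounds $\int_{M_t}\tilde\rho\,d\mu_t\le c\,f_\infty\,\mathrm{diam}(M_0)$ by writing $M_t$ as a radial graph over $S^n$ and estimating the resulting Gaussian integral case by case, and closes with a continuity argument on the cubic $h(r)=sc_2r^3-r+c_3f_0^2$; its constants therefore involve $\max_{M_n\times[0,T_0]}H$ and $\mathrm{diam}(M_0)$, which is why $G$ depends on $T_0$. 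You instead work pointwise: starting from the same evolution equation \eqref{eq1.4}, the inequality $|A|^2\ge H^2/n$ and completing the square absorb the indefinite term $2Hf^2$ into $nf^3$, and Hamilton's trick plus ODE comparison give $\max_{M_t}f\le(f_0^{-2}-2nt)^{-1/2}$. This is more elementary and quantitatively sharper: you get the clean bound $2f_0$ (versus the paper's $2c_3f_0$, and in fact $2c_3f_0^2$ at the end of its proof) on a time interval $t\le\tfrac{3}{8n}f_0^{-2}$ whose length depends only on $n$ and $f_0$, with no reference to $\max H$ or the diameter; the stated form of $T$ is then recovered by choosing $G$ large using $f_0\ge(\max_{M_0}|X|)^{-1}$. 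The one point to tighten is your justification that $\langle X,\nu\rangle$ stays positive on $[0,T]$: the shrinking-sphere barriers control $|X|$ but not $\langle X,\nu\rangle$, so they do not by themselves keep the support function away from zero. The correct mechanism is already contained in your ODE bound: on the maximal interval $[0,T')$ where $\langle X,\nu\rangle>0$, the estimate gives $\langle X,\nu\rangle\ge(f_0^{-2}-2nt)^{1/2}>0$, and an open-closed argument forces $T'\ge\min\{(2nf_0^2)^{-1},T_c\}$. What the paper's heavier integral approach buys is robustness: it is designed to survive in settings (such as flows with transport terms, as in Takasao's work) where the clean pointwise differential inequality is unavailable.
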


\begin{theorem}
\label{th2}
Let $M_t,$ $t\in[0, T_c)$ be a solution of \eqref{eq1.1}.
If $\sup_{M_n\times[0, T_c)}H\leq C_H<\infty,$ then $T_c>\frac{1}{24c_2c_3^2f_0^4},$
where $c_2$ and $c_3$ depends on $M_0.$
\end{theorem}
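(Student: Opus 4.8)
The plan is to turn the short-time gradient estimate of Theorem~\ref{th1} into one that is uniform all the way up to $T_c$ once $H$ is a priori bounded, and then feed this into the standard smooth-extension criterion for \eqref{eq1.1}. First I would revisit the proof of Theorem~\ref{th1} and isolate the point where the constant $G$ acquires its dependence on $T_0$: in the evolution inequality for $\langle X,\nu\rangle^{-1}$ there is a curvature term that, in the general situation, can only be controlled using the (a priori unbounded) geometry of $M_t$ as $t\uparrow T_0$. I expect that under the extra hypothesis $\sup_{M_n\times[0,T_c)}H\le C_H$ this term is instead dominated by the good terms of the inequality, so that the comparison-ODE argument of Theorem~\ref{th1} goes through with $G$ replaced by the explicit $T_0$-free constant $24\,c_2c_3^2$ (the numerical factor and the constants $c_2,c_3$ entering exactly as in that proof). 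This would yield: assuming $\sup H\le C_H$, for every $T_0<T_c$ one has $\langle X,\nu\rangle^{-1}\le 2c_3f_0$ on $\bigl[0,\min\{\tfrac{1}{24\,c_2c_3^2f_0^4},T_0\}\bigr]$, and letting $T_0\uparrow T_c$ gives this bound on $\bigl[0,\min\{\tfrac{1}{24\,c_2c_3^2f_0^4},T_c\}\bigr)$.

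The argument then concludes by contradiction. Suppose $T_c\le\frac{1}{24\,c_2c_3^2f_0^4}$; by the previous step $\langle X,\nu\rangle^{-1}\le 2c_3f_0$ on all of $[0,T_c)$. Since $M_0$ is star-shaped, each $M_t$ is a radial graph $X=\rho(\cdot,t)\,\omega$ over $S^n$; the bound on $\langle X,\nu\rangle^{-1}$ forces $\rho\ge(2c_3f_0)^{-1}>0$ (because $\langle X,\nu\rangle\le|X|=\rho$), while the avoidance principle traps $M_t$ inside a fixed ball and so bounds $\rho$ from above. Hence $\langle X,\nu\rangle$ stays bounded below by a positive constant, the graph gradient stays bounded, and the graphical form of \eqref{eq1.1}, $\partial_t\rho=-H\langle\nu,\omega\rangle$, is a uniformly parabolic quasilinear equation for $\rho$ with uniformly bounded coefficients on $[0,T_c)$; since $|H|\le C_H$, $\partial_t\rho$ is bounded, so $\rho$ is uniformly Lipschitz in space and time. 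Interior estimates for such equations then give a uniform $C^{1,\alpha}$ bound for $\rho$, Schauder estimates upgrade it to $C^{2,\alpha}$, and a bootstrap bounds all higher derivatives, uniformly up to time $T_c$. Therefore $\rho(\cdot,t)$ converges smoothly as $t\uparrow T_c$, so \eqref{eq1.1} extends smoothly past $T_c$, contradicting the maximality of $T_c$. This proves $T_c>\frac{1}{24\,c_2c_3^2f_0^4}$.

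The main obstacle, as I see it, lies in the first step: one must inspect the proof of Theorem~\ref{th1} closely enough to confirm that the term responsible for the ``short time only'' restriction is precisely the one neutralized by the bound $\sup H\le C_H$, and that the resulting constant is exactly the $T_0$-free $24\,c_2c_3^2$ rather than something still depending on $T_0$ or on $C_H$. The remaining ingredients --- preservation of star-shapedness, the avoidance principle, interior regularity for uniformly parabolic quasilinear equations, and the standard smooth-extension theorem for mean curvature flow --- are by now routine, so the crux is entirely the uniformization of the gradient estimate in time.
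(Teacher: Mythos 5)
Your first step is exactly what the paper does: under $\sup H\le C_H$ the constant $c_1=2\max_{M_n\times[0,T_0]}H$ from the proof of Theorem \ref{th1} is bounded independently of $T_0$, so $c_2$ and the time $\frac{1}{24c_2c_3^2f_0^4}$ become $T_0$-free (note, though, that the constant still depends on $C_H$ --- that is unavoidable and consistent with the paper, which sets $c_2=c_Hc|X_0|$). Where you genuinely diverge is in how the uniform bound on $f=\langle X,\nu\rangle^{-1}$ is converted into long-time existence. The paper applies the maximum principle to $\Phi=\log|A|^2+2\log f-Bt$ with $B>4f_\infty c_H$: at an interior maximum the relation $\nabla g/g=-2\nabla f/f$ kills the gradient terms, the term $4fH$ is absorbed by $B$ (using only the upper bound on $H$ and the positivity of $f$), and one obtains $|A|^2\le C(M_0)f^{-2}e^{Bt}$, hence a curvature bound up to time $T$ and the conclusion via the standard extension criterion. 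You instead pass to the radial graph and invoke parabolic regularity theory. Your route, if completed, is more self-contained on the geometric side (avoidance principle, graph representation), but it outsources precisely the paper's main computation: for the quasilinear, non-divergence-form radial graph equation, the passage from a gradient bound to a $C^{1,\alpha}$ (equivalently, curvature) bound is not a soft consequence of uniform parabolicity; it is an Ecker--Huisken-type interior curvature estimate, and the paper's $\Phi$-argument is exactly a global version of that estimate (which is also what yields Remark \ref{rmk1}).

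Two concrete points need repair. First, the hypothesis is the one-sided bound $\sup H\le C_H$, not $|H|\le C_H$; since a star-shaped surface need not be mean-convex, $H$ is not a priori bounded below until after you have a curvature bound, so your claim that $\partial_t\rho$ is bounded (and hence that $\rho$ is uniformly Lipschitz in time) is not justified at the point where you use it. The paper's computation is arranged so that only the upper bound on $H$ is ever needed. Second, the sentence ``interior estimates for such equations then give a uniform $C^{1,\alpha}$ bound'' must be backed by a theorem applicable to graphical mean curvature flow with only $L^\infty$ and gradient bounds as input; once you either cite the Ecker--Huisken interior estimates or reproduce the paper's maximum-principle computation for $\log|A|^2+2\log f-Bt$, the remaining bootstrap and smooth-extension steps are indeed routine.
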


\bigskip

\section{gradient estimates}
\label{sec1}
\setcounter{equation}{0}
\begin{proof}[Proof of Theorem \ref{th1}]
We define
\be\label{eq1.2}
\rho=\rho_{(X_0, t_0)}(X, t)=\frac{1}{\left(4\pi(t_0-t)\right)^{n/2}}
\exp\left\{-\frac{|X-X_0|^2}{4(t_0-t)}\right\}.
\ee
Then, for any $u\in C^2$ we have
\be\label{eq1.3}
\begin{aligned}
&\frac{d}{dt}\int_{M_t}u\rho d\mathcal{H}^n\\
&=\int_{M_t}u_t\rho+u\rho_t-H^2u\rho d\mu_t\\
&=\int_{M_t}\left\{u\left(\left(\frac{d}{dt}+\triangle_{M_t}\right)\rho-H^2\rho\right)\right.
\left.+\rho\left(\frac{d}{dt}-\triangle_{M_t}\right)u\right\}d\mu_t\\
&=-\int_{M_t}u\rho\left|\vec{H}+\frac{1}{2\tau}\vec{F}^\perp\right|^2d\mu_t
+\int_{M_t}\rho\left(\frac{d}{dt}-\triangle_{M_t}\right)ud\mu_t,\\
\end{aligned}
\ee
where $\tau=t_0-t.$

Now let $f=\langle X, \nu\rangle^{-1},$ then it is easy to see that f satisfies
\be\label{eq1.4}
f_t-\triangle f=\frac{-2}{f}|\nabla f|^2+2f^2H-|A|^2f.
\ee

Plugging equation \eqref{eq1.4} into \eqref{eq1.3} we obtain
\be\label{eq1.5}
\begin{aligned}
&\frac{d}{dt}\int_{M_t}f\rho d\mu_t\\
&=-\int_{M_t}f\rho\left|\vec{H}+\frac{1}{2\tau}\vec{F}^\perp\right|^2d\mu_t\\
&-\int_{M_t}\frac{2\rho}{f}|\nabla f|^2-2f^2H\rho+|A|^2f\rho d\mu_t\\
&\leq 2\int_{M_t}f^2H\rho d\mu_t.
\end{aligned}
\ee

Denote $f_{\infty}=\sup_{M_n\times[0, T]}f=f(Y, s),$
$\tilde{\rho}=\rho_{(Y,s)}(X, t),$ $|X_0|=diam(M_0),$
$c_1=2\max_{M_n\times[0, T_0]}H,$ where $T\leq T_0<T_c,$ and $T_c$ is the blow-up time.

Since $M_0$ is a compact star-shaped hypersurface, we know that $c_1>0.$
Therefore, we have
\be\label{eq1.6}
\frac{d}{dt}\int_{M_t}f\tilde{\rho}d\mu_t\leq c_1f^2_\infty\int_{M_t}\tilde{\rho}d\mu_t.
\ee

Next, we are going to estimate $\int_{M_t}\tilde{\rho}d\mu_t.$
When $M_t$ is star-shaped, we can represent $M_t$ as a radial graph over $S_n,$
i.e. for $X(t)\in M_t,$ we have \[X(t)=e^{v(z, t)}z=r(z, t)z,\,z\in S_n.\]
It is easy to see that
\[g_{ij}=e^{2v}\left(\delta_{ij}+v_iv_j\right),\,\mbox{and $\det g=e^{2nv}\left(1+|\nabla v|^2\right)$}.\]
Therefore,
\be\label{eq1.7}
\begin{aligned}
\int_{M_t}\tilde{\rho}d\mu_t&=\is\tilde{\rho}e^{nv}\sqrt{1+|\nabla v|^2}d\mu\\
&=\is\tilde{\rho}\sqrt{1+|\nabla v|^2}r^nd\mu.\\
\end{aligned}
\ee
Since $f=\la X, \nu\ra^{-1}=\frac{\sqrt{1+|\nabla v|^2}}{e^v},$ we get
\be\label{eq1.8}
\begin{aligned}
\int_{M_t}\tilde{\rho}d\mu_t&=\is\tl{\rho}fr^{n+1}dz\\
&=\is\frac{1}{(4\pi\tau)^{n/2}}e^{-\frac{|X(t)-Y|^2}{4\tau}}fr^{n+1}dz\\
&\leq f_\infty\is\frac{1}{(4\pi\tau)^{n/2}}e^{-\frac{|rz-r_0z_0|^2}{4\tau}}r^{n+1}dz.\\
\end{aligned}
\ee
We only need to show
\[Q=\is\frac{1}{(4\pi\tau)^{n/2}}e^{-\frac{|rz-r_0z_0|^2}{4\tau}}r^{n+1}dz\]
is bounded.

Case 1: When $Y$ is the origin, we have
\be\label{eq1.9}
\begin{aligned}
Q&=\is\frac{1}{(4\pi\tau)^{n/2}}e^{-\frac{r^2}{4\tau}}r^{n+1}dz\\
&\leq\frac{\omega_n}{\pi^{n/2}}|X_0|\max_{\phi\in\mathbb{R}_+}e^{-\phi}\phi^{n/2}\\
&\leq\tl{c_1}|X_0|.\\
\end{aligned}
\ee
Case 2: When $Y$ is not the origin, we have\\
a). When $r/r_0\geq 2,$ we have
\[|rz-r_0z_0|^2\geq (r-r_0)^2\geq \frac{r^2}{4}.\]
Thus,
\be\label{eq1.10}
\begin{aligned}
Q&\leq\is\frac{1}{(4\pi\tau)^{n/2}}e^{-\frac{r^2}{16\tau}}r^{n+1}dz\\
&\leq\frac{\omega_n}{\pi^{n/2}}|X_0|\max_{\phi\in\mathbb{R}_+}e^{-\frac{\phi}{4}}\phi^{n/2}\\
&\leq\tl{c_2}|X_0|.
\end{aligned}
\ee
b). When $r/r_0<2,$ we divide $S_n$ into two parts as shown in the graph,and denoted by $S_1,$
$S_2$ respectively.
\def\svgwidth{2 in}
\begingroup%
  \makeatletter%
  \providecommand\color[2][]{%
    \errmessage{(Inkscape) Color is used for the text in Inkscape, but the package 'color.sty' is not loaded}%
    \renewcommand\color[2][]{}%
  }%
  \providecommand\transparent[1]{%
    \errmessage{(Inkscape) Transparency is used (non-zero) for the text in Inkscape, but the package 'transparent.sty' is not loaded}%
    \renewcommand\transparent[1]{}%
  }%
  \providecommand\rotatebox[2]{#2}%
  \ifx\svgwidth\undefined%
    \setlength{\unitlength}{1307.27311683bp}%
    \ifx\svgscale\undefined%
      \relax%
    \else%
      \setlength{\unitlength}{\unitlength * \real{\svgscale}}%
    \fi%
  \else%
    \setlength{\unitlength}{\svgwidth}%
  \fi%
  \global\let\svgwidth\undefined%
  \global\let\svgscale\undefined%
  \makeatother%
  \begin{picture}(1,0.65283272)%
    \put(0,0){\includegraphics[width=\unitlength]{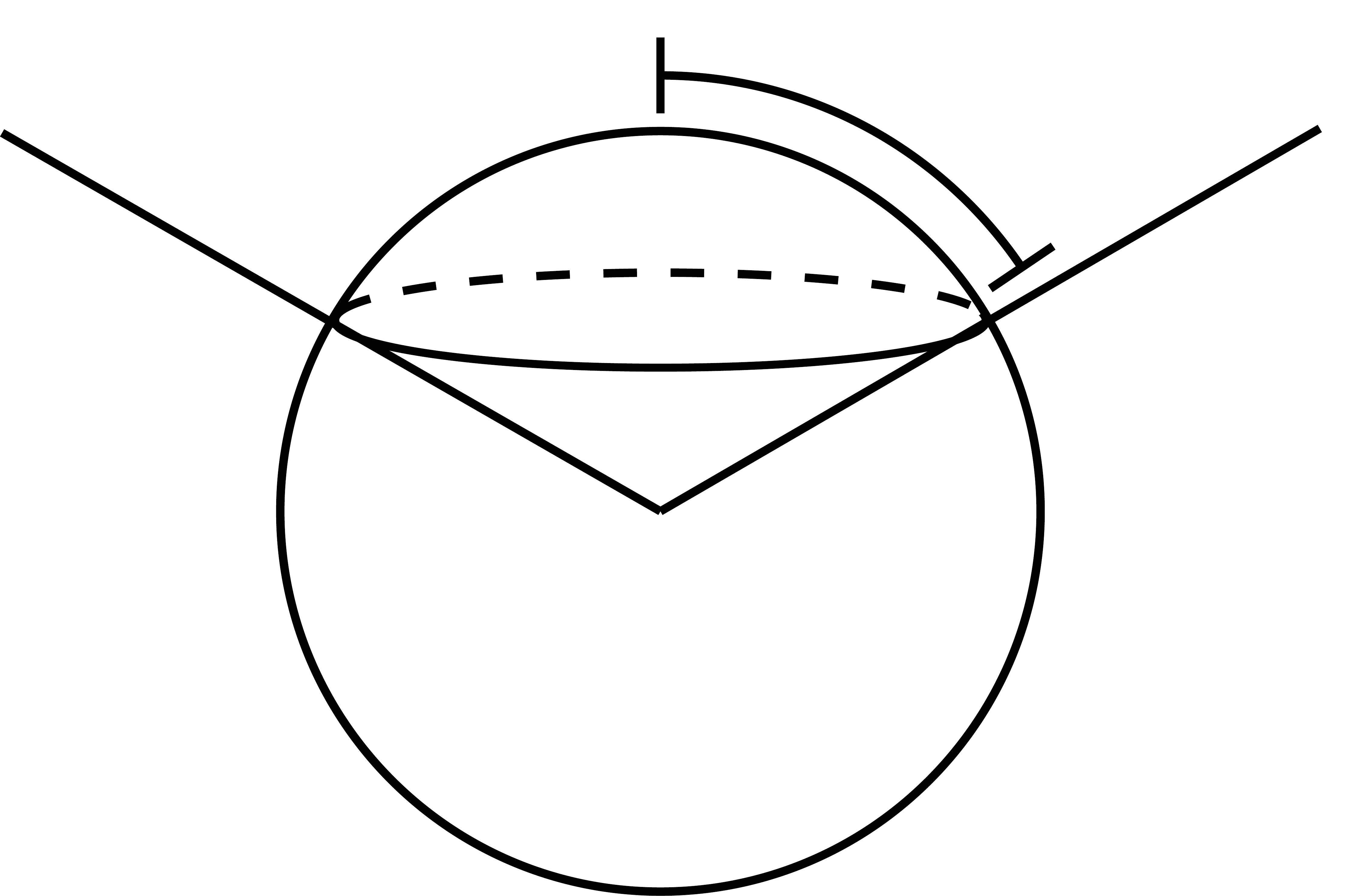}}%
    \put(0.63775436,0.65871778){\color[rgb]{0,0,0}\makebox(0,0)[lt]{\begin{minipage}{1.31404659\unitlength}\raggedright \(\frac{\pi}{3}  \)\end{minipage}}}%
    \put(0.22333474,0.53961436){\color[rgb]{0,0,0}\makebox(0,0)[lt]{\begin{minipage}{1.80873916\unitlength}\raggedright \( S_1 \)\end{minipage}}}%
    \put(0.22333896,0.26373021){\color[rgb]{0,0,0}\makebox(0,0)[lt]{\begin{minipage}{1.48928521\unitlength}\raggedright \(S_2\)\end{minipage}}}%
  \end{picture}%
\endgroup%

Then, we have
\be\label{eq1.11}
\begin{aligned}
Q&=\is\frac{1}{(4\pi\tau)^{n/2}}e^{-\frac{r_0^2|(r/r_0)z-z_0|^2}{4\tau}}r^{n+1}dz\\
&=\int_{S_1}+\int_{S_2}=I+II.\\
\end{aligned}
\ee
In $S_1,$ it's easy to see that
\[\left|(r/r_0)z-z_0\right|^2\geq |z-z_0|^2\cos^2\frac{\pi}{6}=\frac{3}{4}|z-z_0|^2,\]
so we have
\be\label{eq1.12}
I\leq\int_{S_1}\frac{1}{(4\pi\tau)^{n/2}}e^{-\frac{3}{4}r_0^2\frac{|z-z_0|^2}{4\tau}}r^{n+1}dz.
\ee
Since $S_1$ can be represented as a vertical graph over the tangent plane at $z_0,$
we assume $z=\left(x, u(x)\right)\in S_1,$ where $x\in proj(S_1).$
Let $\tl{c_0}=\max_{proj(S_1)}\sqrt{1+|\nabla u|^2},$ then
\be\label{eq1.13}
\begin{aligned}
I&\leq\tl{c_0}\int_{proj(S_1)}\frac{1}{(4\pi\tau)^{n/2}}e^{-\frac{3}{4}r_0^2\frac{|x|^2}{4\tau}}r^{n+1}dx\\
&\leq\tl{c_0}\frac{|X_0|}{\pi^{n/2}}\int_{\mathbb{R}^n}e^{-\frac{3}{4}|x|^2}\left(\frac{r}{r_0}\right)^ndx\\
&\leq\tl{c_3}|X_0|.\\
\end{aligned}
\ee
In $S_2$ we have
\[|(r/r_0)z-z_0|^2\geq 1/2.\]
Therefore,
\be\label{eq1.14}
\begin{aligned}
II&\leq\int_{S_2}\frac{1}{(4\pi\tau)^{n/2}}e^{-\frac{r_0^2}{8\tau}}r^{n+1}dz\\
&\leq2^n|X_0|\frac{|\omega_n|}{\pi^{n/2}}\max_{\phi\in\mathbb{R}_+}e^{(-1/2)\phi}\phi^{n/2}\\
&\leq\tl{c_4}|X_0|.\\
\end{aligned}
\ee
We conclude that
\[\int_{M_t}\tl{\rho}d\mu_t\leq c f_{\infty}|X_0|.\]
Now we have
\be\label{eq1.15}
\frac{d}{dt}\int_{M_t}f\tl{\rho}d\mu_t\leq cc_1|X_0|f^3_\infty=c_2f^3_\infty.
\ee
Let $f_0=\max_{M_0}\la X, \nu\ra^{-1},$ then we have
\[\int_{M_t}f\tl{\rho}d\mu_t|_{t=0}\leq c|X_0|f_0^2\leq c_3f^2_0,\]
$c_3$ is chosen such that $c_3f_0\geq 1.$
Since
\[\lim_{t\nearrow s}\int_{M_t}f\tl{\rho}d_{\mu_t}=f(Y,\,s)=f_\infty,\]
we get
\be\label{eq1.16}
f_\infty-c_3f_0^2\leq\int_0^s\frac{d}{dt}\int_{M_t}f\tl{\rho}d\mu_tdt\leq sc_2f^3_\infty.
\ee
Now consider function $h(r)=sc_2r^3-r+c_3f_0^2,$ we have
\[h(0)=c_3f_0^2>0, \mbox{ and $h(f_0)>0$}.\]
Furthermore,
\[h'(r)=3c_2sr^2-1<-1/2\, \mbox{ for $r\in\left(0, \frac{1}{\sqrt{6c_2s}}\right)$}.\]
We can see if $2c_3f_0^2<\frac{1}{\sqrt{6c_2s}},$ then $h'(r)<-1/2$ for any
$r\in(0, 2c_3f^2_0).$
Therefore, there exists $\alpha\in(f_0, 2c_3f_0^2)$ such that $h(r)<0.$
Let $T=\min\{\frac{1}{24c_2c_3^2f_0^4}, T_0\}$ and $s\leq T,$ we assume $f_\infty>2c_3f_0^2.$
Then, there exists $s'\in(0, s)$ such that
\[v_1:=\max_{M_n\times[0, s']}\la X, \nu\ra^{-1}=\alpha\,\mbox{ and $h(v_1)<0$},\]
leads to a contradiction.
So, we have when $t\leq\min\{\frac{1}{24c_2c_3^2f_0^4}, T_0\},$ $f_{\infty}\leq 2c_3f_0^2.$
\end{proof}

\bigskip

\section{lower bound for blow-up time}
\label{sec2}
\setcounter{equation}{0}
\begin{proof}[Proof of Theorem \ref{th2}]
Now we assume
\[\sup_{M_n\times[0, T_c)}H\leq c_H<\infty,\]
then $c_2=c_Hc|X_0|$ is always bounded.
Let $T=\frac{1}{24c_2c_3^2f_0^4}$ and $T_0\leq T,$ consider
\[\Phi=\log|A|^2+2\log f-Bt\,\mbox{ in $M_n\times[0, T_0]$},\]
where $B>0$ will be determined later.
Denote $g=|A|^2,$ then we have
\[\Phi_t-\triangle\Phi=\frac{1}{g}(g_t-\triangle g)+\frac{2}{f}(f_t-\triangle f)
+\frac{|\nabla g|^2}{g^2}+2\frac{|\nabla f|^2}{f^2}-B.\]
Since
\[\frac{\partial}{\partial t}|A|^2-\triangle|A|^2=-2|\nabla A|^2+2|A|^4,\]
we have
\be\label{eq2.1}
\begin{aligned}
\Phi_t-\triangle\Phi&=\frac{1}{|A|^2}(-2|\nabla A|^2+2|A|^4)\\
&+\frac{2}{f}\left(\frac{-2}{f}|\nabla f|^2+2f^2H-|A|^2f\right)\\
&+\frac{|\nabla g|^2}{g^2}+2\frac{|\nabla f|^2}{f^2}-B\\
&=-2\frac{|\nabla A|^2}{|A|^2}+4fH+\frac{|\nabla g|^2}{g^2}-2\frac{|\nabla f|^2}{f^2}-B.\\
\end{aligned}
\ee
Choose $B>4f_\infty c_H,$
\be\label{eq2.2}
\Phi_t-\triangle\Phi<-2\frac{|\nabla A|^2}{|A|^2}+\frac{|\nabla g|^2}{g^2}-2\frac{|\nabla f|^2}{f^2}.
\ee
At an interior point where $\Phi$ achieves its local maximum, we have
\[\frac{\nabla g}{g}=-2\frac{\nabla f}{f}.\]
Moreover,
\[\frac{\left|\nabla|A|^2\right|^2}{2|A|^4}\leq2\frac{|\nabla A|^2}{|A|^2}.\]
So, we have $\Phi_t-\triangle\Phi<0$ at the point, leads to a contradiction.
We conclude that
\[\Phi\leq C(M_0)\]
and \[T_c>\frac{1}{24c_2c_3^2f_0^4}=\frac{1}{24c_Hc^3|X_0|^3f_0^4}.\]
\end{proof}

\begin{remark}\label{rmk1}
From the proof of Theorem \ref{th2}, we can also conclude that during a mean curvature flow,
$|A|^2$ is staying bounded as long as $\langle X, \nu \rangle^{-1}$ is bounded.
\end{remark}


\begin{thebibliography}{99}


\bibitem{And 1}
B. Andrews.
{\em Noncollapsing in mean-convex mean curvature
flow}. Geom. Topol., {\bf 16}(3):1413-1418, 2012.

\bibitem{CM1}
T. Colding and W. Minicozzi.
{\em Generic mean curvature flow I; generic
singularities.} Ann. of Math. (2), {\bf 175}(2):755-833, 2012.

\bibitem{Eck}
K. Ecker.
{\em Partial regularity at the first singular time for hypersurfaces
evolving by mean curvature.} Math. Ann. (to appear).


\bibitem{Ger} C. Gerhardt.
{\em Curvature Problems.} International Press, Cambridge, Mass. 2006.

\bibitem{HK}
R. Haslhofer and B. Kleiner.
{\em Mean curvature flow of mean convex hypersurfaces.}
arXiv:1304.0926, 2013.


\bibitem{Hui1}
G. Huisken.
{\em Flow by mean curvature of convex surfaces into spheres.}
J. Differential Geom. {\bf 20} (1984), 23-266.

\bibitem{Hui2}
G. Huisken.
{\em Contracting convex hypersurfaces in Riemannian manifolds by their mean curvature.}
Invent. Math. {\bf 84} (1986), 463-480.

\bibitem{Hui3}
G. Huisken.
{\em Asymptotic behaviour for singularities of the mean curvature flow.}
J. Differential Geom. {\bf 31} (1990), 285-299.

\bibitem{Hui4}
G. Huisken.
{\em Local and global behaviour of hypersurfaces moving by mean curvature.}
Proc. Sympos.PureMath. {\bf 54} (1993), 175-191.

\bibitem{HS}
G. Huisken and C. Sinestrari.
{\em Mean curvature flow singularities for mean convex surfaces.}
Calc. Vat. Partial Differential Equations, {\bf 8} (1999), 1-14.

\bibitem{Tak}
K. Takasao.
{\em Gradient estimaes and existence of mean curvauture flow with transport term.}
Diff. Integral Eq. {\bf 26}(1-2), 141-154 (2013).

\end{thebibliography}
\end{document}